\newtheorem{thm}{Theorem}[section]
\newtheorem{cor}[thm]{Corollary}
\newtheorem{lem}[thm]{Lemma}
\theoremstyle{definition}
\newtheorem{defn}[thm]{Definition}
\newtheorem{rem}[thm]{Remark}
\numberwithin{equation}{section}
\newcommand{\N}{\mathbb{N}}
\begin{document}
\title{Equivalent conditions of Devaney chaos on the hyperspace}
\author{Jian Li}
\address{Department of Mathematics, Shantou University, Shantou, Guangdong 515063, P.R. China}
\email{lijian09@mail.ustc.edu.cn}
\subjclass[2010]{Primary 37B99; Secondary 54H20; 54B20}
\keywords{Hyperspace, Devaney chaos, HY-system}

\begin{abstract}
  Let $T$ be a continuous self-map of a compact metric space $X$. The transformation $T$ induces natural
  a continuous self-map $T_K$ on the hyperspace $K(X)$ of all non-empty closed subsets of $X$.
  In this paper, we show that the system $(K(X),T_K)$ on the hyperspace is Devaney chaotic if and only if
  $(K(X),T_K)$ is an HY-system if and only if $(X,T)$ is an HY-system,
  where a system $(Y,S)$ is called an HY-system if it is  totally transitive and has dense small periodic sets.
\end{abstract}

\maketitle

\section{introduction}
Throughout this paper, by a \emph{topological dynamical system} $(X,T)$ we mean a compact metric space $X$
with a continuous map $T$ from $X$ into itself; the metric on $X$ is denoted by $d$.

Let $K(X)$ be the hyperspace on $X$, i.e., the space of non-empty closed subset of $X$ equipped with
the Hausdorff metric $d_H$ defined by
\[d_H(A,B)=\max\Bigl\{\max_{x\in A}\min_{y\in B} d(x,y),\ \max_{y\in B}\min_{x\in A}
d(x,y)\Bigr\}\ \text{for}\ A,B\in K(X).\]
The transformation $T$ induces natural a continuous self-map $T_K$ on the hyperspace $K(X)$ defined by
\[T_K(C)=TC,\ \text{for}\ C\in K(X).\]
Then $(K(X),T_K)$ is also a topological dynamical system.

In 1975, Bauer and Sigmund~\cite{BS75} initiated the study on connection between
properties of  $(X,T)$ and $(K(X),T_K)$.
They considered such properties as distality, transitivity and mixing property.
This leads to a natural question: if one of the system $(X,T)$ and $(K(X),T_K)$ is chaotic in some sense,
how about the other? This question attracted a lot of attention, see, e.g., \cite{B05,GKLOP09,Roman03}
and references therein, and many partial answers were obtained.

One of the most popular definition of chaos is Devaney chaos introduced in~\cite{D89} (see Section~2).
The aim of this paper is to obtain some equivalent conditions of Devaney chaos on the hyperspace.
The main result is the following:
\begin{thm}[Main result]\label{thm:main-result}
Let $(X,T)$ be a dynamical system with $X$ being infinite. Then the following conditions are equivalent:
\begin{enumerate}
  \item $(K(X),T_K)$ is Devaney chaotic;
  \item $(K(X),T_K)$ is an HY-system;
  \item $(X,T)$ is an HY-system.
\end{enumerate}
\end{thm}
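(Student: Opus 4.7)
The strategy is to close the cycle $(1)\Rightarrow(3)\Rightarrow(2)\Rightarrow(1)$, drawing on three classical inputs: Banks' theorem that $T_K$ is transitive on $K(X)$ if and only if $(X,T)$ is weakly mixing; the Huang--Ye observation that any HY-system is weakly mixing; and the Banks--Brooks--Cairns--Davis--Stacey theorem that transitivity plus dense periodic points forces sensitivity on any infinite system. A useful side remark I will reuse is that both ``totally transitive'' and ``dense small periodic sets'' pass from $T$ to every iterate $T^m$.

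For $(1)\Rightarrow(3)$ the work is light: Devaney chaos on $K(X)$ gives transitivity of $T_K$, from which Banks produces weak mixing and hence total transitivity of $T$; and dense periodicity of $T_K$, applied to the Vietoris neighborhood $\langle U\rangle = \{C \in K(X) : C \subseteq U\}$ of any open $U\subseteq X$, returns a periodic $C\subseteq U$ with $T^n C = C$, i.e.\ a small periodic set inside $U$.

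For $(3)\Rightarrow(2)$, total transitivity of $T_K$ will reduce to transitivity of each $T_K^m = (T^m)_K$, which holds because $(X,T^m)$ remains an HY-system and hence is weakly mixing by Huang--Ye. For dense small periodic sets in $K(X)$, I plan to take a basic open set $\langle U_1,\dots,U_k\rangle$, choose closed $A_i\subseteq U_i$ with $T^{n_i}A_i\subseteq A_i$, let $n=\mathrm{lcm}(n_i)$, and show that
\[
\mathcal{A} := \bigl\{ C \in K(X) : C\subseteq\textstyle\bigcup_i A_i\text{ and }C\cap A_i\neq\emptyset\text{ for all }i\bigr\}
\]
is a closed non-empty subset of $\langle U_1,\dots,U_k\rangle$ with $T_K^n\mathcal{A}\subseteq\mathcal{A}$. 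Both defining conditions are preserved by $T^n$ thanks to $T^n A_i\subseteq A_i$, so the verification is a short check.

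For $(2)\Rightarrow(1)$ transitivity of $T_K$ is immediate, so the task is dense periodicity of $T_K$, and this is where I expect the main obstacle. Given a basic open $\mathcal{U}=\langle U_1,\dots,U_k\rangle$, I will pick a small periodic $\mathcal{A}\subseteq\mathcal{U}$ with $T_K^n\mathcal{A}\subseteq\mathcal{A}$ and, via Zorn's lemma, extract a minimal non-empty closed $M\subseteq\mathcal{A}$ with $T_K^n M=M$. The critical move will be to upgrade this abstract minimal set to a genuine $T_K$-periodic point by passing to the union
\[
D := \bigcup_{C\in M} C,
\]
which is closed in $X$ because $M$ is compact in $K(X)$. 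The minimality identity $\{T^nC : C\in M\}=M$ then forces $T^n D=D$, so $D$ is a $T_K^n$-fixed point, and the Vietoris conditions on members of $M$ transfer cleanly to $D$, placing $D$ in $\mathcal{U}$. Combined with transitivity and the infiniteness of $K(X)$ (inherited from $X$), Banks--Brooks--Cairns--Davis--Stacey then delivers Devaney chaos and closes the cycle.
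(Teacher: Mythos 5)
Your proposal is correct, but it closes the cycle in the opposite direction from the paper: you prove $(1)\Rightarrow(3)\Rightarrow(2)\Rightarrow(1)$, whereas the paper proves $(1)\Rightarrow(2)\Rightarrow(3)\Rightarrow(1)$. The difference is more than cosmetic, because it relocates the one genuinely hard step. In the paper that step is $(3)\Rightarrow(1)$: dense small periodic sets downstairs are upgraded to dense periodic points of $T_K$ by choosing, inside each $Y_i$ with $T^{k_i}Y_i\subset Y_i$, a closed $Z_i$ with $T^{k_i}Z_i=Z_i$ (e.g.\ $Z_i=\bigcap_{m\ge 0}T^{mk_i}Y_i$) and taking $Z=\bigcup_i Z_i$ with period $k_1\cdots k_n$. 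In your scheme the hard step is $(2)\Rightarrow(1)$, and you achieve the same upgrade one level up: a minimal subsystem $M$ of $(T_K)^n$ inside the small periodic set $\mathcal{A}$ satisfies $(T_K)^nM=M$ exactly, and since images commute with unions, $D=\bigcup_{C\in M}C$ is a genuine $(T_K)^n$-fixed point lying in $\langle U_1,\dots,U_k\rangle$; the closedness of $D$ is the same compactness fact the paper proves for its $(2)\Rightarrow(3)$ step. The two mechanisms (iterated-image intersection versus minimal set) are interchangeable ways of converting $SY\subset Y$ into $SZ=Z$, and both routes need one of them somewhere. The price of your ordering is that you must also supply $(3)\Rightarrow(2)$ directly, which the paper gets for free by routing through $(1)$; your construction $\mathcal{A}=\langle A_1,\dots,A_k\rangle$ with closed $A_i\subset U_i$ and $n=\operatorname{lcm}(n_i)$ is correct and is a nice explicit transfer of small periodic sets upward. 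What your route buys is a self-contained proof of $(2)\Rightarrow(1)$ entirely at the hyperspace level (dense small periodic sets plus compactness already force dense periodic points there), which makes transparent why Devaney chaos and the HY-property coincide on $K(X)$; the paper's route is marginally shorter overall since its $(1)\Rightarrow(2)$ is immediate from the definitions and Lemma~\ref{lem:WM-tran}.
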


\section{Preliminaries}
\subsection{Basic definitions and notations}
Let $\N$ denote the set of positive integers.
Through the rest of this paper $X$ denotes an infinite compact metric space with a metric $d$.

Let $(X,T)$ be a dynamical system.
We say that $x\in X$ is a \emph{periodic point} if $T^nx=x$ for some $n\in\N$.
The set of all periodic points of the system $(X,T)$ is denoted as $Per(X,T)$.

\begin{defn}
Let $(X,T)$ be a dynamical system. The system $(X,T)$ is called
\begin{enumerate}
  \item \emph{transitive} if any two non-empty open subsets $U$ and $V$ of $X$,
  there exits an $n\in\N$ such that $T^n U\cap V\neq\emptyset$;
  \item \emph{totally transitive} if for each $n\in\N$, $(X,T^n)$ is transitive;
  \item \emph{weakly mixing} if the product system $(X\times X,T\times T)$ is transitive;
\end{enumerate}
\end{defn}

\begin{defn}
Let $(X,T)$ be a dynamical system. The system $(X,T)$ is called \emph{Devaney chaotic} if
it satisfies the following conditions:
\begin{enumerate}
  \item it is transitive;
  \item periodic points are dense;
  \item it is sensitive to initial conditions.
\end{enumerate}
\end{defn}
It is well known that if $X$ is infinite then the third condition of Devaney chaos is
redundant (see, e.g.,~\cite{BBCDS92}). Since we will restrict our attention to compact
metric spaces without isolated points we will say that $(X,T)$ is Devaney chaotic
if it is transitive with dense periodic points.

\begin{defn}
Let $(X,T)$ be a dynamical system. We say that $(X,T)$ has \emph{dense small periodic sets}
if for any non-empty open subset $U$ of $X$
there exists a closed subset $Y$ of $U$ and $k \in \N$ such that $T^kY\subset Y$.
\end{defn}

\begin{defn}
Let $(X,T)$ be a dynamical system. The system $(X,T)$ is called
\begin{enumerate}
  \item an \emph{F-system} if it is totally transitive and has a dense set periodic points.
  \item an \emph{HY-system} if it is totally transitive and  has dense small periodic sets.
\end{enumerate}
\end{defn}

In~\cite{F67} Furstenberg showed that every $F$-system is weakly mixing and disjoint from any minimal system.
Recently, Huang and Ye~\cite{HY05} showed that a system which is totally transitive and
has dense small periodic sets is also weakly mixing and disjoint from any minimal system.
For this reason, such a system is called an \emph{HY-system} in~\cite{Li11}.
It should be noticed that the author in~\cite{Li11} characterized HY-system
by transitive points via the family of weakly thick sets.
Clearly, every F-system is also an HY-system, but there exists an HY-system without periodic points~\cite{HY05}.

\subsection{Hyperspaces}
Let $K(X)$ be the hyperspace of $X$, i.e., the space of non-empty closed subset of $X$ equipped with
the Hausdorff metric $d_H$. Then $K(X)$ is also a compact mertic space.
The following family
\[
\{\langle U_1, \cdots, U_n\rangle:\; U_1, \cdots, U_n \textrm{ are non-empty open subsets of } X,\ n\in \mathbb{N}\}
\]
forms a basis for a topology of $K(X)$ called the \emph{Vietoris topology}, where
\[
\langle S_1,\cdots, S_n\rangle\doteq \Bigl\{A\in K(X):\; K\subset
\bigcup_{i=1}^n S_i \textrm{ and } A\cap S_i\neq \emptyset \textrm{ for each }i=1,\cdots, n\Bigr\}
\]
is defined for arbitrary non-empty subsets $S_1,\cdots, S_n\subset X$.
It is not hard to see that the Hausdorff topology (the topology induced by the Hausdorff metric $d_H$)
and the Vietoris topology for $K(X)$ coincide. For more details on hyperspaces see~\cite{N78}.

\section{Proof of the main result}
First, we need the following useful Lemma.
\begin{lem}[\cite{BS75,B05}]\label{lem:WM-tran}
Let $(X,T)$ be a dynamical system. Then the following conditions are equivalent:
\begin{enumerate}
  \item $(K(X),T_K)$ is weakly mixing;
  \item $(K(X),T_K)$ is transitive;
  \item $(X,T)$ is weakly mixing.
\end{enumerate}
\end{lem}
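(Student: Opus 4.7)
The plan is to close the cycle $(1)\Rightarrow(2)\Rightarrow(3)\Rightarrow(1)$. The implication $(1)\Rightarrow(2)$ is immediate, since weak mixing of any system entails transitivity. The remaining two implications both exploit the Vietoris basis of $K(X)$ to translate between dynamics on $X$ and on $K(X)$, combined with Furstenberg's classical results on weak mixing from~\cite{F67}.

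For $(2)\Rightarrow(3)$, I would use an asymmetric pair of Vietoris basic open sets. Given non-empty open $U, V\subset X$, consider $\langle U\rangle=\{A\in K(X):\emptyset\neq A\subset U\}$ and $\langle U, V\rangle$; both are non-empty. Transitivity of $T_K$ produces $n\in\N$ and $A\in\langle U\rangle$ with $T^nA\in\langle U, V\rangle$. Unpacking the Vietoris conditions, $A\subset U$ while $T^nA$ meets both $U$ and $V$, so picking $x,y\in A\subset U$ with $T^nx\in U$ and $T^ny\in V$ yields the same $n$ satisfying both $n\in N(U,U)$ and $n\in N(U,V)$, where $N(\cdot,\cdot)$ denotes the usual return-time set. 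Since this holds for every pair of non-empty open $U, V$, Furstenberg's characterization of weak mixing implies $(X,T)$ is weakly mixing.

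For $(3)\Rightarrow(1)$, it suffices to show $T_K\times T_K$ is transitive on $K(X)\times K(X)$. Take Vietoris basic open sets $\mathcal{A}_\ell=\langle U_{\ell 1},\dots,U_{\ell p_\ell}\rangle$ and $\mathcal{B}_\ell=\langle V_{\ell 1},\dots,V_{\ell q_\ell}\rangle$ for $\ell=1,2$, and seek $n$ together with $A\in\mathcal{A}_1, B\in\mathcal{B}_1$ such that $T^nA\in\mathcal{A}_2$ and $T^nB\in\mathcal{B}_2$. By Furstenberg's theorem, weak mixing of $T$ implies transitivity of every finite product power $T\times\cdots\times T$. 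Applying this on a product of $N=p_1p_2+q_1q_2$ copies of $X$, with coordinates indexed by pairs $(i,j)\in\{1,\dots,p_1\}\times\{1,\dots,p_2\}$ and $(k,l)\in\{1,\dots,q_1\}\times\{1,\dots,q_2\}$ and sources $U_{1i}, V_{1k}$ mapped into targets $U_{2j}, V_{2l}$ respectively, yields a single $n$ with $T^nU_{1i}\cap U_{2j}\neq\emptyset$ for every $(i,j)$ and $T^nV_{1k}\cap V_{2l}\neq\emptyset$ for every $(k,l)$. Standard assembly (for each $i$ pick $x_i\in U_{1i}$ with $T^nx_i\in U_{21}$; for each $j$ pick $y_j\in U_{11}$ with $T^ny_j\in U_{2j}$; set $A=\{x_i\}\cup\{y_j\}$, and construct $B$ analogously) then produces the desired pair.

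The main obstacle will be in $(2)\Rightarrow(3)$. The naive symmetric attempt using $\langle U_1,U_2\rangle$ and $\langle V_1,V_2\rangle$ only shows that for each witness $A$ produced by transitivity, \emph{some} row--column matching of intersections $T^nU_i\cap V_j$ is non-empty, which is strictly weaker than the simultaneous $T^nU_1\cap V_1\neq\emptyset$ and $T^nU_2\cap V_2\neq\emptyset$ required for $T\times T$ transitivity. Breaking the symmetry via the singleton source $\langle U\rangle$ forces all extracted preimage points to lie in one open set, which is exactly what is needed to invoke Furstenberg's characterization; the final equivalence between that condition and weak mixing is classical and is what we cite from~\cite{F67}.
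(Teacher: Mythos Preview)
The paper does not supply its own proof of this lemma; it is quoted as a known result from \cite{BS75,B05} and used as a black box in the proof of Theorem~\ref{thm:main-result}. There is therefore nothing in the paper to compare your argument against directly.

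That said, your proof is correct and is essentially the argument found in Banks~\cite{B05}. The crucial step $(2)\Rightarrow(3)$ is handled exactly as you propose: one breaks the symmetry by pairing the singleton Vietoris set $\langle U\rangle$ with $\langle U,V\rangle$, so that any witness $A$ furnished by transitivity of $T_K$ has all its points in $U$, yielding a common $n\in N(U,U)\cap N(U,V)$; the three-set criterion for weak mixing then finishes. Your discussion of why the naive symmetric choice $\langle U_1,U_2\rangle,\langle V_1,V_2\rangle$ fails is also the obstruction Banks points out. For $(3)\Rightarrow(1)$ your finite-product argument via Furstenberg is standard; a minor remark is that the assembly you describe actually needs only $p_1+p_2+q_1+q_2$ coordinates rather than $p_1p_2+q_1q_2$, but the surplus does no harm.
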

Now we turn to the proof of the main result of this paper.
\begin{proof}[Proof of Theorem~\ref{thm:main-result}]
(1)$\Rightarrow$(2)  follows from the definitions and~Lemma~\ref{lem:WM-tran}.

(2)$\Rightarrow$(3) Assume that $(K(X),T_K)$ is an HY-system. By Lemma~\ref{lem:WM-tran},
$(X,T)$ is weakly mixing. Then we only need to show that $(X,T)$ has dense small periodic sets.
Let $U$ be a non-empty open subset of $X$.
Then $\langle U \rangle=\{A\in K(X):\; A\subset U\}$ is an open subset of $K(X)$.
Since $(K(X),T_K)$ has dense small periodic set, there exists a closed subset
$\mathcal{A}\subset \langle U\rangle$ and $k\in\N$ such that $(T_K)^k\mathcal{A}\subset \mathcal{A}$.
Let $Y=\bigcup\{A:\; A\in\mathcal{A}\}$.
We want to show that $Y$ is a closed subset of $X$.
Let $x_n$ be a sequence of points in $Y$ and converge to $x$.
Then for every $n\in\mathbb{N}$ there exists an $A_n\in\mathcal{A}$ such that $x_n\in A_n$.
Since $\mathcal{A}$ is a closed subset of $K(X)$ and $K(X)$ is compact,
without loss of generality, we can assume that the sequence $A_n$ converge to $A\in \mathcal{A}$ in the hyperspace $K(X)$.
By the definition of Hausdorff metric, it is not hard to check that $x\in A$.
Then $x\in Y$, which implies that $Y$ is a closed subset of $X$.
Clearly, $Y\subset U$ and $T^k Y\subset Y$. Thus, $(X,T)$ has dense small periodic sets.

(3)$\Rightarrow$(1)  Assume that $(X,T)$ is an HY-system. Then $(X,T)$ is weakly mixing, and
by Lemma~\ref{lem:WM-tran} $(K(X),T_K)$ is also weakly mixing. So we are left to show that
$(K(X),T_K)$ has dense set of periodic points. Let $\mathcal{U}$ be a non-empty open subset of $K(X)$.
Then there exist pairwise disjointness open subsets $U_1,\dotsc,U_n$ of $X$ such that
$\langle U_1,\dotsc,U_n\rangle \subset \mathcal{U}$.
Since $(X,T)$ has dense small periodic sets, there exist $Y_1,\dotsc,Y_n$ and $k_1,\dots,k_n$
such that $Y_i\subset U_i$ and $T^{k_i}Y_i\subset Y_i$ for $i=1,\dotsc,n$.
For each $i=1,\dotsc,n$, choose a closed subset $Z_i$ of $Y_i$ with $T^{k_i}Z_i=Z_i$.
Let $Z=\bigcup_{i=1}^n Z_i$ and $k=k_1\times \dotsm\times k_n$. Then $Z\in K(X)$ and $(T_K)^k Z=Z$.
Therefore, $(K(X),T_K)$ is chaotic in the Devaney sense.
\end{proof}

In~\cite{KM05}, Kwietniak and Misiurewicz strengthened the notion of Devaney chaos in a sense to the extreme,
and introduce exact Devaney chaos.
Recall that a system $(X,T)$ is called \emph{topologically exact}
if for any non-empty open subset $U$ of $X$ there exists an $n\in\N$ such that $T^n U=X$.
The system $(X,T)$  is called \emph{exactly Devaney chaotic}
if it is topologically exact with dense periodic points.

\begin{lem}[\cite{GKLOP09}]\label{lem:exact}
Let $(X,T)$ be a dynamical system. Then
$(K(X),T_K)$ is topologically exact if and only if $(X,T)$ is topologically exact.
\end{lem}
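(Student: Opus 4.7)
The plan is to prove the two implications separately; the nontrivial direction is passing from exactness of $(X,T)$ to exactness of $(K(X),T_K)$.

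For the forward implication, I would exploit the fact that $\langle U\rangle=\{A\in K(X):A\subset U\}$ is a non-empty open set in the Vietoris topology (it contains $\{x\}$ for any $x\in U$). Topological exactness of $T_K$ yields some $n\in\N$ with $(T_K)^n\langle U\rangle=K(X)$. In particular, for each $x\in X$ the singleton $\{x\}$ lies in $(T_K)^n\langle U\rangle$, so there is $A\subset U$ with $T^nA=\{x\}$; hence $x\in T^nA\subset T^nU$ and therefore $T^nU=X$.

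For the reverse implication, given a non-empty open $\mathcal{U}\subset K(X)$ I would pick pairwise disjoint non-empty open sets $U_1,\ldots,U_n$ in $X$ with $\langle U_1,\ldots,U_n\rangle\subset\mathcal{U}$. A useful preliminary observation is that topological exactness of $T$ forces $T$ to be surjective: applying exactness to $X$ itself gives $T^mX=X$ for some $m$, whence $T(X)\supset T(T^{m-1}X)=X$. Consequently, once $T^mU=X$ we also have $T^{m'}U=X$ for every $m'\geq m$. Applying this to each $U_i$ and taking the maximum of the exponents, I obtain a common $k$ with $T^kU_i=X$ simultaneously for all $i$. It then suffices to produce, for each $B\in K(X)$, some $A\in\langle U_1,\ldots,U_n\rangle$ with $T^kA=B$.

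The technical heart, and what I expect to be the main obstacle, is controlling the closure of a preimage set constructed one point at a time. The naive construction chooses $a_{b,i}\in U_i$ with $T^ka_{b,i}=b$ for every $b\in B$ and every $i$, and takes $A=\overline{\{a_{b,i}:b\in B,\ 1\leq i\leq n\}}$. Then $A\in K(X)$, and using continuity of $T^k$ together with compactness one checks that $T^kA=B$ (the image is closed, contains $B$, and is contained in $\overline{B}=B$), and clearly $A\cap U_i\neq\emptyset$; the trouble is that the closure may leak out of $\bigcup U_i$. I would remedy this by shrinking each $U_i$: using the metric, pick an open ball $V_i$ with $\overline{V_i}\subset U_i$, and enlarge $k$ further so that $T^kV_i=X$ still holds for each $i$ (again using that surjectivity propagates the equality $T^{k_i'}V_i=X$ to all larger exponents). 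Choosing the preimages $a_{b,i}$ inside $V_i$ now gives $A\subset\bigcup\overline{V_i}\subset\bigcup U_i$, while the conditions $A\cap U_i\neq\emptyset$ and $T^kA=B$ are retained, so $A\in\langle U_1,\ldots,U_n\rangle\subset\mathcal{U}$ and $(T_K)^k\mathcal{U}=K(X)$ as required.
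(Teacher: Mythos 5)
Your argument is correct, and it is worth noting that the paper offers no proof of this lemma at all --- it is simply quoted from \cite{GKLOP09} --- so there is no ``paper's approach'' to compare against; your proposal supplies a complete, self-contained argument of the standard kind. Both directions check out: the forward direction correctly applies exactness of $T_K$ to the open set $\langle U\rangle$ and reads off $T^nU=X$ from the preimages of singletons; in the reverse direction, the observation that exactness forces surjectivity of $T$ (so that $T^mU=X$ propagates to all larger exponents, allowing a common exponent $k$ for finitely many sets) is exactly the right synchronization device, and the shrinking to balls $V_i$ with $\overline{V_i}\subset U_i$ correctly handles the only delicate point, namely that the closure $A=\overline{\{a_{b,i}\}}$ must stay inside $\bigcup_i U_i$. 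The verification that $T^kA=B$ via compactness and continuity ($T^kA$ is closed, contains $T^k\{a_{b,i}\}=B$, and is contained in $\overline{B}=B$) is sound. One cosmetic remark: the pairwise disjointness of the $U_i$ is never used in your argument and could be dropped, although asserting that such a disjoint basic open set exists inside $\mathcal{U}$ is harmless and is the same convention the paper itself uses in the proof of Theorem~\ref{thm:main-result}.
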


\begin{cor}
Let $(X,T)$ be a dynamical system. Then $(K(X),T_K)$ is exactly Devaney chaotic if and only if
$(X,T)$ is a topologically exact HY-system.
\end{cor}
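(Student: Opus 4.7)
The plan is to derive the corollary as a direct combination of Theorem~\ref{thm:main-result} and Lemma~\ref{lem:exact}. The point is that exact Devaney chaos decomposes into two ingredients---topological exactness and dense periodic points---and each of these transfers independently between $(X,T)$ and $(K(X),T_K)$ via one of the two cited results. So the proof should be essentially a bookkeeping argument.

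For the forward direction, I would start from the assumption that $(K(X),T_K)$ is exactly Devaney chaotic, which by definition means that $(K(X),T_K)$ is topologically exact and has dense periodic points. Since topological exactness trivially implies transitivity, $(K(X),T_K)$ is Devaney chaotic in the paper's sense. Theorem~\ref{thm:main-result} then gives that $(X,T)$ is an HY-system, and Lemma~\ref{lem:exact} gives that $(X,T)$ is topologically exact.

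For the converse, assume $(X,T)$ is a topologically exact HY-system. Theorem~\ref{thm:main-result} yields that $(K(X),T_K)$ is Devaney chaotic, and in particular its periodic points are dense in $K(X)$. Lemma~\ref{lem:exact} yields that $(K(X),T_K)$ is topologically exact. Together these give exact Devaney chaos of $(K(X),T_K)$.

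I do not anticipate any real obstacle here, since the heavy lifting has already been done in the main theorem and in the hyperspace exactness lemma. The only point deserving attention is that the paper uses the convention, valid because $X$ (and hence $K(X)$) is infinite, of defining Devaney chaos without the sensitivity clause; this ensures that the ``Devaney chaotic'' hypothesis produced by Theorem~\ref{thm:main-result} and the ``dense periodic points'' half of exact Devaney chaos line up correctly in both directions of the argument.
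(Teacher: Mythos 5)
Your proof is correct and follows exactly the paper's route: the paper's own proof is the one-line "It follows from Theorem~\ref{thm:main-result} and Lemma~\ref{lem:exact}," and your write-up simply makes that bookkeeping explicit. Nothing to add.
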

\begin{proof}
It  follows from Theorem~\ref{thm:main-result} and Lemma~\ref{lem:exact}.
\end{proof}

\begin{rem}
(1) The authors of~\cite{GKLOP09} constructed a system $(X,T)$ such that $(K(X),T_K)$ is exactly Devaney chaotic, while
the set of  periodic points $Per(X,T)$ is nowhere dense. But this system does have periodic points.

(2) The authors of~\cite{HY05} constructed an HY-system $(X,T)$ without periodic points.
Then $(K(X),T_K)$ is Devaney chaotic, while the set of  periodic points $Per(X,T)$ is empty.
\end{rem}
\subsection*{Acknowledgement}
This work was supported in part by Shantou University Scientific Research Foundation for Talents.
The author wish to thank Wen Huang and Dominik Kwietniak
for the careful reading and helpful suggestions.

\end{document}